\documentclass[reqno,twoside,10pt,english]{amsart}
\usepackage{amsmath,amsfonts,amssymb,amsthm}
\usepackage[pagebackref,colorlinks,citecolor=blue,linkcolor=blue]{hyperref}
\usepackage{color}
\usepackage{graphicx}
\allowdisplaybreaks

\let\th=T

\def\XXint#1#2#3{{\setbox0=\hbox{$#1{#2#3}{\int}$ }
		\vcenter{\hbox{$#2#3$ }}\kern-.6\wd0}}

\newtheorem{theorem}{Theorem}[section]

\newtheorem{proposition}[theorem]{Proposition}

\begin{document}

\title{Recovery of a source of anomalous  diffusion from space-time sparse data$^*$}

\author{Xing Cheng$^1$, Bangti Jin$^2$, Yavar Kian$^3$, and Zhiyuan Li$^4$}

\begin{abstract}
We study an inverse source problem for a time-fractional sub- and super-diffusive anomalous diffusion equation on the unit disk with sparse boundary observations. We prove the recovery of a general class of $L^2$ sources for anomalous diffusion, with separated variables, from boundary measurements that are sparse in  both space and time, using observations at two carefully chosen boundary points on a discrete set of times arbitrarily close to the final time. The analysis relies heavily on the memory effect of the model, properties of complex analysis and the representation of solutions.

\bigskip

\noindent \textbf{Keywords}: fractional diffusion equation, inverse source problem, sparse measurement, uniqueness

\bigskip

\noindent \textbf{Mathematics Subject Classification (2020)} Primary: 35R30; Secondary: 35R11, 26A33
\end{abstract}

\maketitle

\renewcommand{\thefootnote}{\fnsymbol{footnote}}
\footnotetext{\hspace*{-5mm} 
\begin{tabular}{@{}r@{}p{16cm}@{}}
$^*$
&X. Cheng has been supported by the NSF of Jiangsu Province (No.~BK20230036), the NSF of China (Grant U2340221) and the National Key R\&D Program of China (2022YFC3202602).  The work of B. Jin is supported by Hong Kong RGC ANR / Hong Kong RGC Joint Research Scheme (A-CUHK402/24). The work of Y. Kian is supported by the French National Research Agency ANR and Hong Kong RGC Joint Research Scheme for the project IdiAnoDiff (grant ANR-24-CE40-7039). Z. Li has been supported by the National Natural Science Foundation of China (no. 12271277), the Ningbo Youth Leading Talent Project (no. 2024QL045), and the Open Research Fund of the Key Laboratory of Nonlinear Analysis \& Applications (Central China Normal University), Ministry of Education, China (no. NAA20230RG002).\\
$^1$ 
& School of Mathematics, Hohai University, Nanjing 210098, Jiangsu, China, and Laboratory of Mathematical Modeling and Intelligent Computing for Water Systems, Hohai University, Nanjing 211100, China.(\texttt{chengx@hhu.edu.cn}).\\
$^2$
& Department of Mathematics, The Chinese University of Hong Kong, Shatin, N.T., Hong Kong, China (\texttt{b.jin@cuhk.edu.hk}).\\
$^3$
& Univ Rouen Normandie, CNRS, Normandie Univ, LMRS UMR 6085, F-76000 Rouen, France  ( \texttt{yavar.kian@univ-rouen.fr}).\\
$^4$
&School of Mathematics and Statistics, Ningbo University, Ningbo 315211, China ( \texttt{lizhiyuan@nbu.edu.cn}).
\end{tabular}}

\section{Introduction}
Throughout, fix $\alpha\in(0,1)\cup(1,2)$, $T>0$, and let $D:=\{x\in\mathbb R^2: |x|<1\}$ be the unit disk, with boundary $\partial D=\mathbb S^1:=\{x\in\mathbb R^2: |x|=1\}$.  For $\sigma\in L^2(0,T)$ and $f\in H^s(D)$,   $s\in(0,\frac12)$,
consider the following initial boundary value problem (IBVP)
\begin{equation}\label{eq-gov}
\begin{cases}
  \partial_t^\alpha u - \Delta u =F(x,t):= \sigma(t)f(x), & (x,t) \in D \times (0,T), \\
  \partial_t^k u(x,0) = 0,\quad k=0,\cdots,\lfloor \alpha \rfloor, & x \in D,\\
  u(x,t) = 0, & (x,t) \in \partial D \times (0,T),
\end{cases}
\end{equation}
where $\partial_t^\alpha$ denotes the Caputo derivative of order $\alpha$ in time:
$$
\partial_t^\alpha \psi(t) := \frac{1}{\Gamma(1+\lfloor \alpha \rfloor-\alpha)} \int_0^t (t-\tau)^{\lfloor\alpha\rfloor-\alpha} \psi^{(\lfloor\alpha\rfloor+1)}(\tau) \,\mathrm{d}\tau, \quad t>0.
$$
Below we make the standing assumption that there exists $T_1\in (0,T)$ such that $\sigma=0$ on $(T_1,T)$. Using this property, we prove in Proposition \ref{p1} that problem \eqref{eq-gov} admits a unique solution $u\in L^2(0,T;H^2(D))$ such that $u|_{\overline{D}\times (T_1,T]}\in C((T_1,T];C^1(\overline{D}))$. Then, by fixing an arbitrarily chosen strictly increasing sequence $(t_n)_{n\in\mathbb N}$ in $(T_1,T)$ and $z_1,z_2\in\mathbb S^1$, we investigate the following inverse source problem:
\begin{itemize}
\item[{\bf(ISP)}] {\em
Determine uniquely the source $F$ from the space-time sparse boundary data $\partial_\nu u(x_\ell,t_n)$}, $n\in\mathbb N$,
$\ell=1,2$.
\end{itemize}

Anomalous diffusion models of the form \eqref{eq-gov} serve as canonical models for anomalous transport processes arising in heterogeneous porous media, viscoelastic materials, and disordered systems \cite{metzler2000random}. For $\alpha\in(0,1)$, the model characterizes subdiffusion with a sublinearly growing mean squared displacement, and for $\alpha\in(1,2)$, it describes the diffusion-wave regime. These models have been successfully employed in various fields, including dispersion in heterogeneous aquifers \cite{AdamsGelhar:1992} and ion dispersion in column experiments \cite{HatanoHatano:1998} (see also the review \cite{metzler2000random}). In this context, the inverse source problem \textbf{(ISP)} aims at recovering the source $F$ of anomalous diffusion from sparse boundary measurements of the flux.

Inverse source problems for time-fractional diffusion have attracted considerable attention from the mathematical community (see the surveys \cite{jin2015tutorial, liu2019handbook}). We only mention the early work \cite{sakamoto2011initial} as well as more recent contributions where the memory effect of fractional diffusion equations has been exploited to restrict measurements close to the final time \cite{JaK,KLY}.

Despite these important progresses, the above-mentioned results require measurements that are sometimes inconvenient to implement in practical applications. Motivated by such limitations, several researchers have investigated the recovery of a diffusion source in the two-dimensional unit disc $D$ by restricting measurements to two points on the boundary $\partial D$ along the time interval $(0,T)$. This includes results for diffusion type equations \cite{DCK,gong2026identification,GZZ,HR,Run2020Heatequation_source_problem} as well as their extension by \cite{li2020unique} to a broader class of time-fractional diffusion problems. These contributions significantly restrict the amount of the data. Nonetheless, all these works still involve measurements over a full continuous time interval. This limits the range of such results by preventing engineering applications where measurements are usually available only at a small number of spatial sensors and discrete time instants due to experimental limitations and cost constraints. Motivated by these limitations of the existing literature, the goal of this article is to provide a positive answer to problem \textbf{(ISP)} for a certain class of time-fractional diffusion equations.

First, we provide a positive answer to problem \textbf{(ISP)} when the time-dependent factor $\sigma$ is known.

\begin{theorem}\label{thm-spatial}
Let $\sigma\in L^2(0,T)$ with $\sigma\not\equiv0$, and assume that $\sigma=0$ on $(T_1,T)$ for some $T_1\in(0,T)$. Let $f_j\in H^s(D)$, $j=1,2$, $s\in(0,\frac12)$, let $z_j=e^{i\theta_j}\in\mathbb S^1$ with $\theta_1-\theta_2\notin\mathbb Q\pi$, and let $(t_n)_{n\in\mathbb N}$ be a strictly increasing sequence in $(T_1,T)$. Let $u_j$ be the solution to \eqref{eq-gov} with source $\sigma f_j$, $j=1,2$. If
\begin{equation}\label{eq-obs1}
\partial_\nu u_1(z_\ell,t_n)=\partial_\nu u_2(z_\ell,t_n),\quad n\in\mathbb N,\ \ell=1,2,
\end{equation}
then $f_1=f_2$ in $D$.
\end{theorem}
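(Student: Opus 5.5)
By linearity, set $f:=f_1-f_2$ and $u:=u_1-u_2$. Then $u$ solves \eqref{eq-gov} with source $\sigma f$, and $\partial_\nu u(z_\ell,t_n)=0$ for all $n\in\mathbb N$, $\ell=1,2$. The goal is to show $f=0$.

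\textbf{Step 1: eigenfunction representation.} Use the Dirichlet eigenfunctions of $-\Delta$ on $D$ in polar coordinates. For $m\in\mathbb Z$ and $k\ge1$ let $\varphi_{m,k}(r,\theta)=c_{m,k}J_{|m|}(j_{|m|,k}r)e^{im\theta}$, with eigenvalue $\lambda_{m,k}=j_{|m|,k}^2$, where $j_{|m|,k}$ is the $k$th positive zero of $J_{|m|}$. Let $f_{m,k}=(f,\varphi_{m,k})$. For $t>T_1$ we have $\sigma=0$ on $(T_1,t)$, so
\[
u(\cdot,t)=\sum_{m,k} f_{m,k}\,w_{m,k}(t)\,\varphi_{m,k},\qquad w_{m,k}(t)=\int_0^{T_1}(t-\tau)^{\alpha-1}E_{\alpha,\alpha}\bigl(-\lambda_{m,k}(t-\tau)^\alpha\bigr)\sigma(\tau)\,\mathrm d\tau .
\]
The regularity from Proposition \ref{p1} ($u\in C((T_1,T];C^1(\overline D))$) lets us differentiate termwise in $r$ at $r=1$. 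This gives
\[
\partial_\nu u(e^{i\theta},t)=\sum_{m}e^{im\theta}\sum_k c_{m,k}\,j_{|m|,k}J'_{|m|}(j_{|m|,k})\,f_{m,k}\,w_{m,k}(t)=:\sum_m e^{im\theta}g_m(t).
\]

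\textbf{Step 2: from the discrete times to the whole half-line.} Each $w_{m,k}$ extends holomorphically in $t$ to the sector $\{\Re t>T_1\}$, or at least to a neighbourhood of $(T_1,\infty)$ that is connected and contains $(T_1,T]$. The estimate $|E_{\alpha,\alpha}(-z)|\le C/(1+|z|)$, valid in a sector around the positive axis for both $\alpha<1$ and $1<\alpha<2$, gives locally uniform convergence of the double series. Hence $t\mapsto\partial_\nu u(z_\ell,t)$ is analytic near $(T_1,\infty)$. Its zeros $t_n$ accumulate at $\lim t_n\in(T_1,T]$, which is an interior point of the domain of holomorphy. By the identity theorem, $\partial_\nu u(z_\ell,t)=0$ for all $t>T_1$.

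\textbf{Step 3: Laplace transform in $t$, separating the modes in $k$.} Write $w_{m,k}(t)=\int_0^{T_1}h_{\lambda}(t-\tau)\sigma(\tau)\,\mathrm d\tau$. For $t>T_1$, the function $\sum_k a_k w_{m,k}(t)$ is governed by asymptotics as $t\to\infty$, or better by the Laplace-type representation of $E_{\alpha,\alpha}$, in which the poles in $\lambda$ at $s^\alpha=-\lambda_{m,k}$ are distinct. I would use the vanishing on $(T_1,\infty)$ together with the fact that $\sigma$ is supported in $(0,T_1)$ and $\sigma\not\equiv0$. This is the memory effect: the combination $\sum_m e^{im\theta_\ell}\sum_k b_{m,k} f_{m,k}\,\widehat{(\cdot)}$ behaves like a function of $\lambda$ alone. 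Extending analytically and taking residues at each $\lambda_{m,k}$, one gets
\[
\sum_{(m',k'):\,\lambda_{m',k'}=\lambda}e^{im'\theta_\ell}b_{m',k'}f_{m',k'}=0\quad\text{for every eigenvalue }\lambda,
\]
after dividing by a nonzero analytic factor built from $\hat\sigma$. Since $\hat\sigma$ is entire and nonzero, we may divide away from its isolated zeros. By Bourget's hypothesis (Siegel), $J_m$ and $J_n$ share no positive zeros for $m\neq\pm n$, so each eigenvalue comes only from the pair $\pm m$. Thus
\[
e^{im\theta_\ell}f_{m,k}+e^{-im\theta_\ell}f_{-m,k}=0,\qquad \ell=1,2,
\]
using $b_{m,k}=b_{-m,k}\ne0$, which holds because $J'_{|m|}(j_{|m|,k})\neq0$. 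The determinant of this $2\times2$ system is $2i\sin(m(\theta_2-\theta_1))$, which is nonzero for $m\neq0$ because $\theta_1-\theta_2\notin\mathbb Q\pi$. Hence $f_{m,k}=0$ for all $m,k$, and so $f=0$.

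\textbf{Main obstacle.} Step 3 is the hard part. Vanishing of $\sum_{m,k}\beta_{m,k}w_{m,k}(t)$ for $t>T_1$ must imply vanishing of each eigenvalue cluster. I would handle this by defining
\[
G(z)=\sum_{m,k}\frac{\beta_{m,k}}{z+\lambda_{m,k}}
\]
and showing it is meromorphic. To link it to the data, I would take the Laplace transform of $\partial_\nu u(z_\ell,\cdot)$ after an extension argument. Concretely, extend $u$ to all $t>0$ and note that $\partial_\nu u(z_\ell,\cdot)$ is supported in $[0,T_1]$. Its Laplace transform $\hat\sigma(s)\,G(s^\alpha)$ is therefore entire of exponential type. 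This forces $\hat\sigma(s)\,G(s^\alpha)$ to have no poles, and one can then argue residue by residue, choosing $s$ with $s^\alpha=-\lambda$ on an appropriate branch. For $\alpha\in(1,2)$, the point $s^\alpha=-\lambda$ lies at angle $\pi/\alpha<\pi$, so it is reachable in the cut plane. Justifying convergence of $G$ with $f\in H^s$ only, using the bound $|J'_{|m|}(j_{|m|,k})|\sim j^{-1/2}$ and the Weyl growth of $\lambda_{m,k}$, is the delicate technical point. Handling possible zeros of $\hat\sigma$ at those points also needs care; I would perturb the branch or use the full contour argument instead.
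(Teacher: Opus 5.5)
Your Steps 1--2 and the closing $2\times2$ determinant argument agree with the paper. These are: analyticity of the flux on $(T_1,\infty)$; the identity theorem at the accumulation point of $(t_n)$; support of the flux in $[0,T_1]$; and an entire Laplace transform $\widehat h_\ell(p)=\widehat\sigma(p)G(p^\alpha)$ in the cut plane near $0$. Here $G(z)=\sum_n\beta_n/(\lambda_n+z)$ and $\beta_n=\sum_k\langle f,\varphi_{n,k}\rangle\partial_\nu\varphi_{n,k}(z_\ell)$.

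The gap is in how Step 3 extracts $\beta_n$ from this. The absence of a pole at a point $p_*$ with $p_*^\alpha=-\lambda_n$ only yields $\widehat\sigma(p_*)\beta_n=0$. Since $p_*$ is fixed by $\lambda_n$, you cannot ``divide away from the isolated zeros of $\widehat\sigma$'' there.

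\begin{itemize}
\item For $\alpha\in(0,1)$ no such $p_*$ exists in $\mathbb C\setminus(-\infty,0]$ at all.
\item Passing to other sheets gives the candidates $\lambda_n^{1/\alpha}e^{i(2j+1)\pi/\alpha}$, $j\in\mathbb Z$. For rational $\alpha$ this is a finite set. The maps $\sigma\mapsto\widehat\sigma(p)$ are continuous linear functionals on the infinite-dimensional space $L^2(0,T_1)$, so some nonzero $\sigma$ has a transform vanishing at all these points.
\end{itemize}

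So the residue mechanism alone cannot exclude even a single nonzero $\beta_n$. Perturbing the branch helps only for irrational $\alpha$, where the points are dense on a circle. The theorem, however, covers every $\alpha\in(0,1)\cup(1,2)$.

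The missing idea is that the usable information sits at the branch point $p=0$, not at the poles. Take $p=re^{i\theta}$ with $0<r<\lambda_1^{1/\alpha}$ and let $\theta\to\pm\pi$. Continuity of the entire function $\widehat h_\ell$ across the cut gives
\[
\widehat\sigma(-r)\bigl(G(r^\alpha e^{-i\pi\alpha})-G(r^\alpha e^{i\pi\alpha})\bigr)=0 .
\]
Here $r$ is yours to choose, so you may divide by $\widehat\sigma(-r)$ on a whole interval, since its zeros are isolated. Hence the $\sigma$-independent function
\[
H(z)=\frac{G(ze^{-i\pi\alpha})-G(ze^{i\pi\alpha})}{2iz\sin(\pi\alpha)}=\sum_{n}\frac{\beta_n}{(\lambda_n+ze^{i\pi\alpha})(\lambda_n+ze^{-i\pi\alpha})}
\]
vanishes on an interval, and therefore identically. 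Its poles $-\lambda_ne^{\pm i\pi\alpha}$ are simple and pairwise distinct because $\sin(\pi\alpha)\neq0$. So every $\beta_n=0$, for every admissible $\alpha$, and no information on $\widehat\sigma$ at the poles is needed.

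The convergence bound you anticipated is what makes $G$ and $H$ meromorphic and justifies exchanging limits with the sums. It is $|\beta_n|\le\pi^{-1/2}\lambda_n^{1/2}\sum_k|\langle f,\varphi_{n,k}\rangle|$, combined with $f\in D(A^{s/2})$ and $\sum_n\lambda_n^{-1-s}<\infty$.
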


To the best of our knowledge, Theorem \ref{thm-spatial} provides the first positive answer to problem \textbf{(ISP)}. In contrast to  \cite{li2020unique}, the measurements can be associated with a discrete subset of $\partial D\times (T-\epsilon,T)$ with $\epsilon>0$ arbitrarily small. This makes the result of Theorem \ref{thm-spatial} much more suitable for applications that can only use discrete data. In addition, Theorem \ref{thm-spatial} agrees with recent developments in inverse source problems for fractional diffusion equations involving a posteriori measurements \cite{JaK,KLY}.
The proof of Theorem \ref{thm-spatial} relies fundamentally on the memory property of \eqref{eq-gov}, explicit spectral properties of the Dirichlet Laplacian on the unit disc and representation formulas for solutions of \eqref{eq-gov} using complex analysis arguments.

While in general it is impossible to provide a full answer to problem \textbf{(ISP)} when $\sigma$ is unknown, we can still give the following positive answer when the fractional order $\alpha$ is irrational.

\begin{theorem}\label{thm-full}
Let $\alpha\not\in\mathbb Q$, $\sigma_j\in L^2(0,T)$ with $\sigma_j\not\equiv0$, $j=1,2$, and assume that $\sigma_1=\sigma_2=0$ on $(T_1,T)$ for some $T_1\in(0,T)$. Let $f_j\in H^s(D)$, $j=1,2$, $s\in(0,\frac12)$, let $z_j=e^{i\theta_j}\in\mathbb S^1$ with $\theta_1-\theta_2\notin\mathbb Q\pi$, and let $(t_n)_{n\in\mathbb N}$ be a strictly increasing sequence in $(T_1,T)$. Let $u_j$ be the solution to \eqref{eq-gov} with source $F_j=\sigma_jf_j$, $j=1,2$. Then \eqref{eq-obs1} implies that  $F_1=F_2$.
\end{theorem}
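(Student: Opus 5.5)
Here is the plan. First I reduce the problem to a single-exponent expansion in time. Write the Dirichlet eigenpairs of $-\Delta$ on $D$ in polar coordinates: $\varphi_{m,k}(r,\theta)=J_{|m|}(\sqrt{\lambda_{m,k}}\,r)e^{im\theta}$, $m\in\mathbb Z$, $k\in\mathbb N$, with $\sqrt{\lambda_{m,k}}$ the zeros of $J_{|m|}$. The solution is
$$u_j(\cdot,t)=\sum_{m,k}\Big(\int_0^{T_1}(t-\tau)^{\alpha-1}E_{\alpha,\alpha}(-\lambda_{m,k}(t-\tau)^\alpha)\sigma_j(\tau)\,d\tau\Big) f_{j,m,k}\,\varphi_{m,k}.$$
Since $\sigma_j$ vanishes on $(T_1,T)$, each coefficient extends holomorphically in $t$ to a sector $\{\Re t>T_1\}$ (or a slit domain). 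The series $\partial_\nu u_j(z_\ell,t)$ converges there locally uniformly, using the $H^s$ regularity and the $C^1$ property from Proposition \ref{p1}. The points $t_n$ accumulate at some $t_*\in(T_1,T]$. If $t_*<T$, the identity theorem gives equality of the data on all of $(T_1,\infty)$. If $t_*=T$, I would argue via holomorphy on a neighborhood of the real half-line. I expect this boundary accumulation case to need care, perhaps by extending the domain past $T$ since the formula makes sense for all $t>T_1$. The upshot is
$$\sum_{m,k}\Big(\int_0^{T_1}K_{\lambda_{m,k}}(t-\tau)\sigma_1\Big)a^1_{m,k}(\ell)=\sum_{m,k}\Big(\int_0^{T_1}K_{\lambda_{m,k}}(t-\tau)\sigma_2\Big)a^2_{m,k}(\ell)\quad\text{for all } t>T_1,$$
where $a^j_{m,k}(\ell)=f_{j,m,k}\,\partial_r\varphi_{m,k}(1)\,e^{im\theta_\ell}$.

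Next I take large-$t$ asymptotics. The Mittag-Leffler expansion $t^{\alpha-1}E_{\alpha,\alpha}(-\lambda t^\alpha)\sim-\sum_{p\ge1}\frac{(-1)^{p}\lambda^{-p-1}t^{-\alpha p-1}}{\Gamma(-\alpha p)}$ turns the left side into an asymptotic series in powers $t^{-\alpha p-1-q}$, where $q$ comes from Taylor expanding $(t-\tau)^{\cdot}$. The coefficients are $\sum_{m,k}\lambda_{m,k}^{-p-1}a^j_{m,k}$ times the moments $\int\tau^q\sigma_j$. An alternative that avoids asymptotics is a Laplace transform in $t$ after extending to $(T_1,\infty)$, or on the full line. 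Writing $s$ for the Laplace variable, the transformed identity becomes
$$\hat\sigma_1(s)\sum\frac{a^1_{m,k}}{s^\alpha+\lambda_{m,k}}=\hat\sigma_2(s)\sum\frac{a^2_{m,k}}{s^\alpha+\lambda_{m,k}}$$
up to a function holomorphic/entire. Since $\sigma_j$ has compact support, $\hat\sigma_j$ is entire in $s$.

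The role of $\alpha\notin\mathbb Q$ is as follows. In the Laplace picture, $G_j(s):=\sum_{m,k} a^j_{m,k}/(s^\alpha+\lambda_{m,k})$ is meromorphic in $w=s^\alpha$, with poles at $w=-\lambda_{m,k}$. Meanwhile $\hat\sigma_j(s)$ is entire in $s=w^{1/\alpha}$. Continuing around the origin in $w$ multiplies the argument of $s$ by $e^{2\pi i/\alpha}$. Irrationality makes these rotations dense, which forces the ratio $\hat\sigma_1/\hat\sigma_2$, and hence $G_2/G_1$, to be single-valued in both variables and therefore constant. Hence $\hat\sigma_1=c\,\hat\sigma_2$ with $c\ne0$ (otherwise $\sigma_j\equiv0$), i.e.\ $\sigma_1=c\sigma_2$. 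In the asymptotic picture the argument is analogous: the exponents $\alpha p+1+q$ are all distinct for irrational $\alpha$, so the coefficients separate. Comparing the leading terms in $q$ (the moments of $\sigma_j$) against those in $p$ (the spectral moments of $f_j$) then yields the proportionality.

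Once $\sigma_1=c\sigma_2$, the problem reduces to Theorem \ref{thm-spatial} applied with $\sigma=\sigma_2$ and sources $cf_1,f_2$. That gives $cf_1=f_2$, hence $F_1=\sigma_1f_1=\sigma_2 f_2=F_2$. The main obstacle is the separation step just described. Rigorously justifying the asymptotic expansion or the monodromy argument for the infinite eigenfunction sums needs the uniform convergence afforded by $f\in H^s$. One also has to handle the case where all the spectral sums for some $\ell$ vanish. That case is excluded by the $\theta_1-\theta_2\notin\mathbb Q\pi$ argument of Theorem \ref{thm-spatial}, which shows that not both boundary traces can vanish unless $f_j=0$; when $f_j=0$ the conclusion is trivial.
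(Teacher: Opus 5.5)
There is a genuine gap in the separation step. You flag it yourself as the main obstacle, and it is exactly where the paper's proof does its work. Your outer structure does match the paper: continuation to $(T_1,\infty)$, entire $\widehat\sigma_j$, choosing $\ell$ via the $\theta_1-\theta_2\notin\mathbb Q\pi$ argument, and the final reduction to Theorem~\ref{thm-spatial}.

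The problem is in the Laplace picture. The identity you actually have is $\widehat\sigma_1(p)G_1(p^\alpha)-\widehat\sigma_2(p)G_2(p^\alpha)=\widehat h_\ell(p)$, where $\widehat h_\ell$ is the Laplace transform of $h_{1,\ell}-h_{2,\ell}$. It is entire because this difference is supported in $[0,T_1]$. It is not known to vanish, since that would mean the fluxes at $z_\ell$ coincide for all $t>0$, which is not part of the data. Your monodromy argument nevertheless works with $\widehat\sigma_1/\widehat\sigma_2=G_2/G_1$, i.e.\ it silently drops $\widehat h_\ell$. For the same reason, ``$G_2/G_1$ constant'' is not an available intermediate statement. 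Continuing once around $w=0$ does not remove the remainder either, because $\widehat h_\ell(p)$ is rotated to $\widehat h_\ell(e^{2i\pi/\alpha}p)$ together with the $\widehat\sigma_j$.

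The paper first kills the remainder using that $\widehat h_\ell$ is single-valued in $p$:
\begin{itemize}
\item The jump of the left-hand side across $(-\infty,0]$ vanishes. Since $\sin(\pi\alpha)\neq0$, this yields the exact homogeneous relation $\widehat\sigma_1(-r)\psi_{1,\ell}(r^\alpha)=\widehat\sigma_2(-r)\psi_{2,\ell}(r^\alpha)$, with new functions $\psi_{j,\ell}$ that are meromorphic in $w$.
\item Only then is the monodromy in $w$ used. Continuing to $(-\lambda_1,0)$ from above and from below gives a homogeneous $2\times2$ system for $(\psi_{1,\ell}(-r),-\psi_{2,\ell}(-r))$ with coefficients $\widehat\sigma_j(-r^{1/\alpha}e^{\pm i\pi/\alpha})$.
\item Choosing $\ell$ with $(\psi_{1,\ell},\psi_{2,\ell})\not\equiv(0,0)$ (your $\theta_1-\theta_2$ argument, applied to the residues of the $\psi_{j,\ell}$) forces the determinant to vanish. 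This means $g(\zeta e^{2i\pi/\alpha})=g(\zeta)$ near $0$ for $g=\widehat\sigma_1/\widehat\sigma_2$, which is holomorphic at $0$ after possibly swapping indices.
\item Comparing Taylor coefficients and using $e^{2im\pi/\alpha}\neq1$ gives $g\equiv C$.
\end{itemize}

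Your asymptotic alternative does not have this defect, since it only uses $h_{1,\ell}=h_{2,\ell}$ on $(T_1,\infty)$. It can be turned into a correct proof that avoids the branch-cut analysis altogether. As written, though, it is only declared ``analogous'', and comparing leading terms is not enough, since $\mu^j_0$ or $S^j_1$ below may vanish.

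Write $b^j_n=\sum_k\langle f_j,\varphi_{n,k}\rangle\partial_\nu\varphi_{n,k}(z_\ell)$, $S^j_p=\sum_n\lambda_n^{-p-1}b^j_n$ and $\mu^j_q=\int_0^{T_1}\tau^q\sigma_j(\tau)\,d\tau$. The coefficient of $t^{-\alpha p-1-q}$ in $h_{j,\ell}$ is $(-1)^p\Gamma(-\alpha p)^{-1}\binom{\alpha p+q}{q}S^j_p\mu^j_q$. For $\alpha\notin\mathbb Q$ the prefactor is nonzero and the exponents are pairwise distinct, so $S^1_p\mu^1_q=S^2_p\mu^2_q$ for \emph{all} $p\ge1$, $q\ge0$.

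To finish from there:
\begin{itemize}
\item If $S^j_p=0$ for every $p\ge1$, then $\sum_n b^j_n/(\lambda_n+w)$ is constant, so all $b^j_n=0$.
\item Hence, unless $f_1=f_2=0$, for a suitable $\ell$ some $S^{j}_{p_0}\neq0$. The moment sequences of $\sigma_1$ and $\sigma_2$ are then proportional, and density of polynomials in $L^2(0,T_1)$ gives $\sigma_1=C\sigma_2$ with $C\neq0$.
\item The termwise expansion is justified by the uniform remainder $O((\lambda_nt^\alpha)^{-N-1})$ for $\lambda_n\ge\lambda_1$, together with $\sum_n|b^j_n|\lambda_n^{-1}<\infty$, which follows from $f_j\in H^s$.
\end{itemize}
With these details supplied, the gap would be closed, and the reduction to Theorem~\ref{thm-spatial} proceeds exactly as in the paper.
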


Theorem \ref{thm-full} extends the analysis of Theorem \ref{thm-spatial} by providing, for what appears to be the first time, a full answer to problem \textbf{(ISP)} when the fractional order $\alpha$ is irrational. Its proof is carried out by combining the arguments of Theorem \ref{thm-spatial} with complex analysis techniques that allow us to remove classical limitations in inverse source problems when the fractional order $\alpha$ is irrational. This extends the continuous and one-dimensional observations by \cite{DFJK,JaK} to space-time sparse data in the two-dimensional setting.

This article is organized as follows. In Section \ref{eq-obs1} we recall preliminary properties of solutions of the IBVP
\eqref{eq-gov} while in Section \ref{s2} and \ref{s3}, we prove Theorems \ref{thm-spatial} and \ref{thm-full}, respectively.

\section{Preliminary properties}
In this section, we recall properties of problem \eqref{eq-obs1}.
We denote by $A$ the operator $-\Delta$ acting on $L^2(D)$ with its domain $D(A)=H^2(D)\cap H_0^1(D)$, and by $D(A^s)$, $s>0$, the domain of its fractional power. We equip also $L^2(D)$ with the scalar product $\langle \cdot,\cdot\rangle$
defined by $\langle f,g\rangle=\int_Df\overline{g}{\rm d}x$.

Recall that the eigenvalues of $A$, listed without multiplicity, form a strictly increasing unbounded sequence $(\lambda_n)_{n\in\mathbb N}$ of positive numbers, and the multiplicity $p_n$ of $\lambda_n$ satisfies $p_n\in\{1,2\}$ (see, e.g., \cite{li2020unique}). Each eigenspace admits an orthonormal basis $\{\varphi_{n,1},\ldots,\varphi_{n,p_n}\}$ of eigenfunctions, which in polar coordinates $(r,\theta)\in(0,1]\times(-\pi,\pi)$ can be described as follows. If $p_n=1$, then
$
\varphi_{n,1}(r,\theta)=\omega_n J_0(\lambda_n^{\frac12} r)
$, $(r,\theta)\in(0,1)\times (-\pi,\pi)$;
if $p_n=2$, then
$
\varphi_{n,1}(r,\theta)=\omega_n J_{m(n)}(\lambda_n^{\frac12} r)e^{im(n)\theta}$, $ \varphi_{n,2}(r,\theta)=\omega_n J_{m(n)}(\lambda_n^{\frac12} r)e^{-im(n)\theta}
$,
where  $m(n)\in\mathbb N:=\{1,2,\ldots\}$, $J_m$ denotes the Bessel function of the first kind of order $m$, $\lambda_n^{\frac12}$ is a positive zero of $J_{m(n)}$, and
$
\omega_n := \pi^{-\frac12}\,|J_{m(n)+1}(\lambda_n^{\frac12})|^{-1},
$
so that $\|\varphi_{n,k}\|_{L^2(D)}=1$, $k=1,\ldots,p_n$, (recall that $\int_0^1 J_m(zr)^2 r\,\mathrm{d}r=\tfrac12 J_{m+1}(z)^2$ for every positive zero $\zeta$ of $J_m$). Using the recurrence $J_m'(z)=-J_{m+1}(z)+\tfrac{m}{z}J_m(z)$, we have
\begin{equation}\label{eq-bdry}
\partial_\nu\varphi_{n,k}(z_\ell)=-\pi^{-\frac12}\lambda_n^{\frac12}\varepsilon_n e^{\pm im(n)\theta_\ell},\quad z_\ell=e^{i\theta_\ell},\ k=1,\ldots,p_n,
\end{equation}
with $\varepsilon_n:=\mathrm{sign}(J_{m(n)+1}(\lambda_n^{\frac12}))\in\{-1,1\}$ and the sign $+$ for $k=1$, $-$ for $k=2$ (for $p_n=1$ the exponential factor is understood as $1$). In particular, $\partial_\nu\varphi_{n,k}(z_\ell)\neq0$. This relation is central to the analysis below.

Now fix $z_\ell=e^{i\theta_\ell}$, $\ell=1,2$, and set 
\begin{equation}\label{h} h_\ell(t)
= \sum_{n=1}^\infty \sum_{k=1}^{p_n} \Big(\int_0^{\min(t,T)} (t-s)^{\alpha-1} E_{\alpha,\alpha}(-\lambda_n(t-s)^\alpha)\sigma(s)\,\mathrm{d}s\Big)\langle f,\varphi_{n,k}\rangle\,\partial_\nu\varphi_{n,k}(z_\ell),\  \ell=1,2,\ t>0.
\end{equation}
Then we have the following representation of the flux.
\begin{proposition}\label{p1} Let $\sigma\in L^2(0,T)$ and $f\in H^s(\Omega)$, $s\in(0,\frac12)$. Then problem \eqref{eq-obs1} admits a unique solution $u\in L^2(0,T;C^1(\overline{D}))$ satisfying, for a.e. $t\in(0,T)$, the following boundary representation 
\begin{equation}\label{p1a}\partial_\nu u(z_\ell,t)
= h_\ell(t),\quad \ell=1,2,
\end{equation}
where the sequence converges in the sense of $L^2(0,T)$. Moreover, for $\ell=1,2$, the map $h_\ell$ is analytic on $(T_1,+\infty)$.
\end{proposition}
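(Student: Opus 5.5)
We will write the solution through its eigenfunction expansion and then justify each claim separately. For $f\in H^s(D)$ with $s\in(0,\frac12)$ we have $H^s(D)=D(A^{s/2})$ with equivalent norms, since no boundary condition is seen below $s=\frac12$. Hence $\sum_{n,k}\lambda_n^{s}|\langle f,\varphi_{n,k}\rangle|^2<\infty$. Set $f_{n,k}=\langle f,\varphi_{n,k}\rangle$ and
$$
u_{n}(t)=\int_0^{t}(t-\tau)^{\alpha-1}E_{\alpha,\alpha}(-\lambda_n(t-\tau)^\alpha)\sigma(\tau)\,\mathrm d\tau .
$$
The candidate solution is $u=\sum_{n,k}u_n(t)f_{n,k}\varphi_{n,k}$, which is the standard Duhamel/Mittag-Leffler representation. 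Uniqueness follows from the Laplace transform: any solution with zero data has vanishing Fourier coefficients, since each coefficient solves a scalar fractional ODE with zero data.

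\textbf{Regularity.} Use the bound $|t^{\alpha-1}E_{\alpha,\alpha}(-\lambda t^\alpha)|\le C t^{\alpha-1}(1+\lambda t^\alpha)^{-1}$. Its $L^1(0,T)$ norm is $O(\lambda^{-1+\epsilon})$ for every $\epsilon>0$, and is in fact $\lesssim \lambda^{-1}\log\lambda$. Young's inequality then gives $\|u_n\|_{L^2(0,T)}\le C\lambda_n^{-1+\epsilon}\|\sigma\|_{L^2}$. Therefore
$$
\|u\|_{L^2(0,T;D(A^{\gamma}))}^2\le C\sum_{n}\lambda_n^{2\gamma-2+2\epsilon}\sum_k|f_{n,k}|^2 ,
$$
which is finite once $2\gamma-2+2\epsilon\le s$, i.e. $\gamma<1+s/2$. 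Since $D(A^\gamma)\subset H^{2\gamma}(D)\hookrightarrow C^1(\overline D)$ for $2\gamma>2$ in two dimensions, we get $u\in L^2(0,T;C^1(\overline D))$. Because $\partial_\nu$ evaluated at $z_\ell$ is a bounded functional on $H^{2\gamma}(D)$ with $2\gamma>2$, the representation \eqref{p1a} follows with convergence in $L^2(0,T)$.

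\textbf{Analyticity.} For $t>T_1$ the integral defining $u_n$ runs only over $(0,T_1)$, since $\sigma=0$ on $(T_1,T)$. We then define
$$
h_{\ell,n,k}(t)=\int_0^{T_1}(t-\tau)^{\alpha-1}E_{\alpha,\alpha}(-\lambda_n(t-\tau)^\alpha)\sigma(\tau)\,\mathrm d\tau\, f_{n,k}\,\partial_\nu\varphi_{n,k}(z_\ell).
$$
Each term extends holomorphically in $t$ to the sector $S=\{t: \Re t>T_1,\ |\arg(t-T_1)|<\delta\}$ with $\delta$ small. Indeed, $(t-\tau)^\alpha$ then lies in the sector $|\arg|<\alpha\delta<\pi-\alpha\pi/2$, where the Mittag-Leffler asymptotics give $|E_{\alpha,\alpha}(-\lambda z)|\le C(1+\lambda|z|)^{-2}$; this holds because $E_{\alpha,\alpha}(-x)$ decays like $x^{-2}$. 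For $t$ in a compact set $K\subset S$ we have $|t-\tau|\ge c_K>0$. So each term is bounded by $C_K\lambda_n^{-2}|f_{n,k}|\lambda_n^{1/2}\|\sigma\|_{L^1}$, using \eqref{eq-bdry}.

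Weyl's law gives $\lambda_n\sim cn$ (counting without multiplicity, with $p_n\le2$). Together with Cauchy--Schwarz and $\sum|f_{n,k}|^2<\infty$, this gives $\sum_n\lambda_n^{-3/2}|f_{n,k}|\le(\sum\lambda_n^{-3})^{1/2}\|f\|<\infty$. The series therefore converges uniformly on compacts of $S$, so $h_\ell$ is holomorphic there and real-analytic on $(T_1,\infty)$. It coincides a.e. on $(T_1,T)$ with $\partial_\nu u(z_\ell,\cdot)$, and the continuity in $C((T_1,T];C^1(\overline D))$ follows by the same uniform estimate in the $D(A^\gamma)$ norm.

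\textbf{Main obstacle.} The delicate point is the sectorial decay estimate $|E_{\alpha,\alpha}(-z)|\lesssim|z|^{-2}$ uniformly in a complex sector, in particular for $\alpha\in(1,2)$, where the admissible sector $|\arg z|<\pi(1-\alpha/2)$ shrinks. One must check that the sector $S$ can be chosen with $\delta$ small enough that $\arg\big(\lambda_n(t-\tau)^\alpha\big)$ stays within it. The estimate holds as long as $\alpha\delta<\pi(1-\alpha/2)$, so any $\delta<\pi(1-\alpha/2)/\alpha$ works.
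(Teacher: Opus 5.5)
Your proposal is correct and follows the same route as the paper: the Mittag-Leffler eigenfunction expansion, the identification $H^s(D)=D(A^{s/2})$ for $s<\frac12$, the embedding into $C^1(\overline D)$ that makes $\partial_\nu(\cdot)(z_\ell)$ a bounded functional, and analyticity of the series for $t>T_1$ using $\sigma=0$ on $(T_1,T)$. The only difference is that you prove directly the regularity and analyticity facts that the paper imports from \cite{sakamoto2011initial} and \cite{JK1}, using Young's inequality with the kernel bound, and the sectorial $O(|z|^{-2})$ decay of $E_{\alpha,\alpha}(-z)$ with a Weierstrass M-test; you obtain $L^2(0,T;D(A^\gamma))$ for every $\gamma<1+\frac s2$ rather than for $\gamma=1+\frac s2$, which still suffices.
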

\begin{proof} 
First, in view of \cite[Chapter~1,~Theorem~11.4]{LM1} and \cite[Chapter~1,~Theorem~11.6]{LM1}, we have $f\in D(A^{\frac s2})$. Moreover, following \cite[Theorem 2.2]{sakamoto2011initial}, the IBVP \eqref{eq-gov} with $f$ replaced by $A^{\frac s2}f$
admits a unique solution $v\in L^2(0,T;D(A))$ taking the form
$$v(\cdot,t)=\sum_{n=1}^\infty \sum_{k=1}^{p_n} \Big(\int_0^t (t-s)^{\alpha-1} E_{\alpha,\alpha}(-\lambda_n(t-s)^\alpha)\sigma(s)\,\mathrm{d}s\Big)\langle A^{\frac s2}f,\varphi_{n,k}\rangle\,\varphi_{n,k},\quad t\in(0,T),$$
where the sequence converges in $L^2(0,T;D(A))$. Also, one can check that $u=A^{-\frac s2}v$ which implies that $u\in L^2(0,T;D(A^{1+\frac s2}))$ and it takes the form
\begin{equation}\label{p1b}u(\cdot,t)=\sum_{n=1}^\infty \sum_{k=1}^{p_n} \Big(\int_0^t (t-s)^{\alpha-1} E_{\alpha,\alpha}(-\lambda_n(t-s)^\alpha)\sigma(s)\,\mathrm{d}s\Big)\langle f,\varphi_{n,k}\rangle\,\varphi_{n,k},\quad t\in(0,T),\end{equation}
where the sequence  converges in $L^2(0,T;D(A^{1+\frac s2}))$. By the continuous embedding $D(A^{1+\frac s2})\subset H^{2+s}(D)\subset C^1(\overline{D})$, we deduce that $u\in L^2(0,T;C^1(\overline{D}))$ and the sequence in  \eqref{p1b} converges in $L^2(0,T;C^1(\overline{D}))$. Then, the representation \eqref{p1a} can be easily deduced from this last statement.
By combining the above argumentation with \cite[Proposition 2.2]{JK1}, one can check that the map $w$ defined on $D\times (0,+\infty)$ by
$$w(\cdot,t)= \sum_{n=1}^\infty \sum_{k=1}^{p_n} \Big(\int_0^{\min(t,T)} (t-s)^{\alpha-1} E_{\alpha,\alpha}(-\lambda_n(t-s)^\alpha)\sigma(s)\,\mathrm{d}s\Big)\langle f,\varphi_{n,k}\rangle\,\varphi_{n,k},\quad t\in(0,+\infty)$$
is analytic with respect to $t\in(T_1,+\infty)$ as a map taking values in $D(A^{1+\frac s2})\subset H^{2+s}(D)\subset C^1(\overline{D})$. Therefore, the last statement of the proposition follows directly from the identity $h_\ell(t)=\partial_\nu w(z_\ell,t)$, $ t>0, \ell=1,2$.
\end{proof}

\section{Proof of Theorem \ref{thm-spatial}}\label{s2}
Fix $\ell\in\{1,2\}$, extend $\sigma$ by zero to $(0,\infty)$, and set $u:=u_1-u_2$ and $f:=f_1-f_2$. Let $h_\ell$ be defined by \eqref{h}. By Proposition \ref{p1}, 
$h_\ell(t)=\partial_\nu u(z_\ell, t)$, $t\in(0,T)$. In light of  \eqref{eq-obs1}, we have
$h_\ell(t_n)=0$, $n\in\mathbb N$. This, Proposition \ref{p1} and the isolated zero theorem imply that supp$(h_\ell)\subset[0,T_1]$. Hence, the Laplace transform $\widehat h_\ell$ is analytic and well-defined on $\mathbb C$ by
$\widehat h_\ell(p):=\int_0^\infty h_\ell(t)e^{-pt}=\int_0^{T} h_\ell(t)e^{-pt}\,\mathrm{d}t,\  p\in\mathbb C$.
Moreover, using classical properties of Mittag-Leffler functions (see e.g. \cite{Po}) and the identity \eqref{h} in a similar way to \cite[Proposition 2.2]{DFJK}, one can check that, for all $p\in\{z\in\mathbb C:\ |z|<\lambda_1^{\frac1\alpha},\ \mathrm{Re}\,z>0\}$, we have
\begin{equation}\label{eq-lt}
\widehat h_\ell(p)=\widehat\sigma(p)\sum_{n=1}^\infty\sum_{k=1}^{p_n}\frac{\langle f,\varphi_{n,k}\rangle}{\lambda_n+p^\alpha}\,\partial_\nu\varphi_{n,k}(z_\ell).
\end{equation}
Since both sides of \eqref{eq-lt} are analytic in $\mathcal Q=\{p\in\mathbb C\setminus(-\infty,0]: |p|<\lambda_1^{\frac1\alpha}\}$, by the isolated zero theorem, the identity \eqref{eq-lt} is still true on $\mathcal Q$. Thus, by fixing  $r\in(0,\lambda_1^{\frac1\alpha})$ and using the continuity of $\widehat h_\ell$ in $\mathbb C$, we get
$$\begin{aligned}
    0&=\widehat h_\ell(-r)-\widehat h_\ell(-r)=\lim_{\theta\to-\pi^+}\widehat h_\ell(re^{i\theta})-\lim_{\theta\to\pi^-}\widehat h_\ell(re^{i\theta})\\
&=\widehat\sigma(-r)\Big(\sum_{n=1}^\infty\sum_{k=1}^{p_n}\frac{\langle f,\varphi_{n,k}\rangle}{\lambda_n+r^\alpha e^{-i\pi\alpha}}\,\partial_\nu\varphi_{n,k}(z_\ell)
-\sum_{n=1}^\infty\sum_{k=1}^{p_n}\frac{\langle f,\varphi_{n,k}\rangle}{\lambda_n+r^\alpha e^{i\pi\alpha}}\,\partial_\nu\varphi_{n,k}(z_\ell)\Big),
\end{aligned}$$
where the limits may be interchanged with the sums by the uniform convergence in $\theta\in[-\pi,\pi]$. Euler's formula then gives
$$
0=2i\,\widehat\sigma(-r)\,r^\alpha\sin(\pi\alpha)\sum_{n=1}^\infty\frac{\sum_{k=1}^{p_n}\langle f,\varphi_{n,k}\rangle \partial_\nu\varphi_{n,k}(z_\ell)}{(\lambda_n+r^\alpha e^{i\pi\alpha})(\lambda_n+r^\alpha e^{-i\pi\alpha})}.
$$
Since $\sigma\not\equiv0$, $\widehat\sigma$ is a nonzero entire function, so its zeros are isolated and there exist $0<r_1<r_2<\lambda_1^{\frac1\alpha}$ such that $\widehat\sigma(-r)\neq0$ for all $r\in(r_1,r_2)$. Since $\sin(\pi\alpha)\neq0$, we deduce
\begin{equation}\label{eq-H}
H(s):=\sum_{n=1}^\infty\frac{\sum_{k=1}^{p_n}\langle f,\varphi_{n,k}\rangle \partial_\nu\varphi_{n,k}(z_\ell)}{(\lambda_n+se^{i\pi\alpha})(\lambda_n+se^{-i\pi\alpha})}\,=0,\quad s\in(r_1^\alpha,r_2^\alpha).
\end{equation}
The set $\mathcal O:=\mathbb C\setminus\{-\lambda_ne^{\pm i\pi\alpha}: n\in\mathbb N\}$ is connected, and moreover, $H$ extends analytically to $\mathcal O$. By the isolated zero theorem, $H\equiv0$ on $\mathcal O$, which implies that
\begin{equation}\label{eq-c}
\sum_{k=1}^{p_n}\langle f,\varphi_{n,k}\rangle\,\partial_\nu\varphi_{n,k}(z_\ell)=0,\quad n\in\mathbb N,\ \ell=1,2.
\end{equation}
For $p_n=1$, \eqref{eq-c} and \eqref{eq-bdry} yield $\langle f,\varphi_{n,1}\rangle=0$. For $p_n=2$, dividing \eqref{eq-c} for $\ell=1,2$ by the common nonzero factor $-\pi^{-\frac12}\lambda_n^{\frac12}\varepsilon_n$ gives the linear system $$e^{im(n)\theta_\ell}\langle f,\varphi_{n,1}\rangle+e^{-im(n)\theta_\ell}\langle f,\varphi_{n,2}\rangle=0,\quad \ell=1,2,
$$
whose determinant $e^{im(n)(\theta_1-\theta_2)}-e^{-im(n)(\theta_1-\theta_2)}=2i\sin(m(n)(\theta_1-\theta_2))$ is nonzero, because $\theta_1-\theta_2\notin\mathbb Q\pi$. Hence $\langle f,\varphi_{n,k}\rangle=0$ for all $n\in\mathbb N$ and $k=1,\ldots,p_n$, and since $\{\varphi_{n,k}\}$ is an orthonormal basis of $L^2(D)$, we get $f=0$, i.e., $f_1=f_2$.

\section{Proof of Theorem \ref{thm-full}}\label{s3}

We extend $\sigma_j$ by zero to $(0,\infty)$, and for $j,\ell\in\{1,2\}$, let $h_{j,\ell}$ be given by \eqref{h}, with $\sigma=\sigma_j$ and $f=f_j$, and set $h_\ell:=h_{1,\ell}-h_{2,\ell}$. In view of Proposition \ref{p1}, we have $h_{j,\ell}(t)=\partial_\nu u_j(z_\ell,t)$ for $t\in(0,T)$ and  \eqref{eq-obs1} implies that $h_\ell(t_n)=0$, $n\in\mathbb N$. In light of Proposition \ref{p1} and the isolated zero theorem, this condition implies that supp$(h_\ell)\subset[0,T_1]$ and the Laplace transform of $h_\ell$ is analytic and well defined on $\mathbb C$ by
$\widehat h_\ell(p)=\int_0^T h_\ell(t)e^{-pt}\,\mathrm{d}t$, $p\in\mathbb C$. 
Similarly to Theorem \ref{thm-spatial}, by taking the Laplace transform termwise in \eqref{h},  for all $p\in\{z\in\mathbb C:\ |z|<\lambda_1^{\frac1\alpha},\ \mathrm{Re}\,z>0\}$, we have
$$\widehat h_\ell(p)=\sum_{n=1}^\infty\sum_{k=1}^{p_n}\frac{\widehat\sigma_1(p)\langle f_1,\varphi_{n,k}\rangle-\widehat\sigma_2(p)\langle f_2,\varphi_{n,k}\rangle}{\lambda_n+p^\alpha}\,\partial_\nu\varphi_{n,k}(z_\ell).
$$
Then, repeating the argumentation of Theorem \ref{thm-spatial}, we can extend this identity to  $\{p\in\mathbb C\setminus(-\infty,0]: |p|<\lambda_1^{\frac1\alpha}\}$ and deduce that, for all $r\in(0,\lambda_1^{\frac1\alpha})$, we have
$$\begin{aligned}
0&=\lim_{\theta\to-\pi^+}\widehat h_\ell(re^{i\theta}) - \lim_{\theta\to\pi-}\widehat h_\ell(re^{i\theta}) = \sum_{j=1}^2(-1)^{j+1}\sum_{n=1}^\infty\sum_{k=1}^{p_n}\frac{\widehat\sigma_1(-r)\langle f_1,\varphi_{n,k}\rangle-\widehat\sigma_2(-r)\langle f_2,\varphi_{n,k}\rangle}{\lambda_n+r^\alpha e^{i(-1)^{j}\alpha\pi}}\,\partial_\nu\varphi_{n,k}(z_\ell).\end{aligned}
$$
With the shorthand notation
$$
\psi_{j,\ell}(z):=\sum_{n=1}^\infty\sum_{k=1}^{p_n}\frac{\langle f_j,\varphi_{n,k}\rangle}{(\lambda_n+ze^{-i\alpha\pi})(\lambda_n+ze^{i\alpha\pi})}\,\partial_\nu\varphi_{n,k}(z_\ell),\quad z\in\mathcal O_1:=\mathbb C\setminus\{-\lambda_ne^{\pm i\alpha\pi}: n\in\mathbb N\},
$$
the last identity becomes
$$r^\alpha(e^{i\alpha\pi}-e^{-i\alpha\pi})\,\big(\widehat\sigma_1(-r)\,\psi_{1,\ell}(r^\alpha)-\widehat\sigma_2(-r)\,\psi_{2,\ell}(r^\alpha)\big)=0.$$ Since $r^\alpha(e^{i\alpha\pi}-e^{-i\alpha\pi})\neq0$, we get
\begin{equation}\label{eq-psi2}
\widehat\sigma_1(-r)\,\psi_{1,\ell}(r^\alpha)-\widehat\sigma_2(-r)\,\psi_{2,\ell}(r^\alpha)=0,\quad r\in(0,\lambda_1^{\frac1\alpha}).
\end{equation}
It is clear that each $\psi_{j,\ell}$ is holomorphic on $\mathcal O_1$, and $\mathcal O_1\setminus(-\infty,0]$ is connected. The map
$
\mathcal R:z\mapsto \widehat\sigma_1(-z^{\frac1\alpha})\,\psi_{1,\ell}(z)-\widehat\sigma_2(-z^{\frac1\alpha})\,\psi_{2,\ell}(z)
$
is holomorphic on $\mathcal O_1\setminus(-\infty,0]$, and by \eqref{eq-psi2} it vanishes on $(0,\lambda_1)$; hence $\mathcal R=0$ on $\mathcal O_1\setminus(-\infty,0]$. Taking $z=re^{i\theta}$, $r\in(0,\lambda_1^{\frac1\alpha})$, and letting $\theta\to\pi^-$ and $\theta\to-\pi^+$,  gives
\begin{equation}\label{eq-sys}
\widehat\sigma_1(-r^{\frac1\alpha}e^{\pm\frac{ i\pi}{\alpha}})\,\psi_{1,\ell}(-r)-\widehat\sigma_2(-r^{\frac1\alpha}e^{\pm\frac{ i\pi}{\alpha}})\,\psi_{2,\ell}(-r)=0,\quad r\in(0,\lambda_1^{\frac1\alpha}).
\end{equation}

If $f_1=f_2\equiv 0$, then $F_1=\sigma_1f_1=0=\sigma_2f_2=F_2$, and the desired conclusion follows immediately. Hence, we may assume from now on that $(f_1,f_2)\neq(0,0)$.

We claim that for at least one $\ell\in\{1,2\}$, the pair $(\psi_{1,\ell},\psi_{2,\ell})$ is not identically zero. Indeed, if $\psi_{j,\ell}\equiv0$ for both $\ell\in\{1,2\}$ and all $j$, then multiplying $\psi_{j,\ell}$ by $(z+\lambda_ne^{i\alpha\pi})(z+\lambda_ne^{-i\alpha\pi})$ and sending $z\to-\lambda_ne^{i\alpha\pi}$ yields $\sum_{k=1}^{p_n}\langle f_j,\varphi_{n,k}\rangle\,\partial_\nu\varphi_{n,k}(z_\ell)=0$ for all $n,j,\ell$; the determinant argument at the end of the proof of Theorem~\ref{thm-spatial} then forces $f_j\equiv0$, which contradicts $(f_1,f_2)\neq(0,0)$. We fix such an $\ell$ from now on. Since $\psi_{1,\ell}$ and $\psi_{2,\ell}$ are holomorphic on $\mathcal O_1\supset(-\lambda_1,0)$ and are not both identically zero, their common zeros on $(-\lambda_1,0)$ are isolated; hence, shrinking if necessary, there exists $r_0\in(0,\lambda_1^{\frac1\alpha})$ such that
$
(\psi_{1,\ell}(-r),\psi_{2,\ell}(-r))\neq(0,0),\ r\in(0,r_0)
$.
By \eqref{eq-sys}, for $r\in(0,r_0)$ the nonzero vector $(\psi_{1,\ell}(-r),-\psi_{2,\ell}(-r))$ lies in the kernel of the matrix
$$
M_r:=\begin{pmatrix}
\widehat\sigma_1(-r^{\frac1\alpha}e^{\frac{i\pi}{\alpha}}) & \widehat\sigma_2(-r^{\frac1\alpha}e^{\frac{i\pi}{\alpha}}) \\[2mm]
\widehat\sigma_1(-r^{\frac1\alpha}e^{-\frac{i\pi}{\alpha}}) & \widehat\sigma_2(-r^{\frac1\alpha}e^{-\frac{i\pi}{\alpha}})
\end{pmatrix},
$$
so $\det M_r=0$, i.e., with $s:=r^{\frac1\alpha}\in(0,r_0^{\frac1\alpha})$,
\begin{equation}\label{eq-det}
\widehat\sigma_1(-se^{\frac{i\pi}{\alpha}})\,\widehat\sigma_2(-se^{-\frac{i\pi}{\alpha}})-\widehat\sigma_1(-se^{-\frac{i\pi}{\alpha}})\,\widehat\sigma_2(-se^{\frac{i\pi}{\alpha}})=0,\quad s\in(0,r_0^{\frac1\alpha}).
\end{equation}
Since $\sigma_j\not\equiv0$, $\widehat\sigma_j$ is a nonzero entire function, and its zeros are isolated. Hence, there exists $r_1\in(0,r_0^{\frac1\alpha})$ such that $\widehat\sigma_j(z)\neq0$ for all $z\in\mathbb C$ with $0<|z|<r_1$, $j=1,2$. On the disk $ \mathbb D_{r_1}:=\{z\in\mathbb C: |z|<r_1\}$ the quotient $\widehat\sigma_1/\widehat\sigma_2$ is meromorphic with (at most) a single singularity at $0$; upon possibly interchanging the roles of the indices $1$ and $2$, we may assume without loss of generality that there exists $g$ holomorphic on $\mathbb D_{r_1}$ such that
$
\widehat\sigma_1(z)=g(z)\,\widehat\sigma_2(z)$, $z\in \mathbb D_{r_1}$.
Substituting this into \eqref{eq-det} yields
$$
\widehat\sigma_2(-se^{\frac{i\pi}{\alpha}})\,\widehat\sigma_2(-se^{-\frac{i\pi}{\alpha}})\,\big(g(-se^{\frac{i\pi}{\alpha}})-g(-se^{-\frac{i\pi}{\alpha}})\big)=0,\quad s\in(0,r_1).
$$
Since $|-se^{\pm\frac{i\pi}{\alpha}}|=s\in(0,r_1)$, the two factors $\widehat\sigma_2(-se^{\pm\frac{i\pi}{\alpha}})$ are nonzero. Consequently,
$$
g(we^{\frac{i\pi}{\alpha}})=g(we^{-\frac{i\pi}{\alpha}}),\quad w\in(-r_1,0)
.$$
Thus, the function $z\mapsto g(ze^{\frac{i\pi}{\alpha}})-g(ze^{-\frac{i\pi}{\alpha}})$ is holomorphic on $\mathbb D_{r_1}$ and vanishes on $(-r_1,0)$, so by the isolated zero theorem it vanishes on all of $\mathbb D_{r_1}$. Differentiating $m$ times and evaluating at $z=0$ gives
$
e^{\frac{im\pi}{\alpha}}g^{(m)}(0)=e^{-\frac{im\pi}{\alpha}}g^{(m)}(0)$, $m\ge1$.
Since $\alpha$ is irrational, $e^{\frac{2im\pi}{\alpha}}\neq1$ for every $m\ge1$ (otherwise $m/\alpha\in\mathbb Z$, forcing $\alpha\in\mathbb Q$); consequently $g^{(m)}(0)=0$ for all $m\ge1$, so $g$ is constant on $\mathbb D_{r_1}$, say $g\equiv C$. Note that $C\neq0$ (otherwise $\widehat\sigma_1\equiv0$ on $\mathbb D_{r_1}$ and hence $\sigma_1\equiv0$, a contradiction). Thus
$
\widehat\sigma_1(p)=C\,\widehat\sigma_2(p)$, for $ p\in(0,r_1)
$.
Both $\widehat\sigma_1$ and $\widehat\sigma_2$ are entire, so the identity theorem gives $\widehat\sigma_1\equiv C\widehat\sigma_2$ on $\mathbb C$, and the injectivity of the Laplace transform implies $\sigma_1=C\sigma_2$ a.e.~on $(0,\infty)$, in particular on $(0,T)$.

Therefore $u_1$ solves \eqref{eq-gov} with the source $\sigma_2(t)\widetilde f_1(x)$, where $\widetilde f_1:=Cf_1\in D(A^s)$. Then the application of Theorem~\ref{thm-spatial} with $\sigma=\sigma_2$ and the pair $(\widetilde f_1,f_2)$ (which satisfy all its hypotheses) yields $f_2=\widetilde f_1=Cf_1$. Hence
$
F_1(x,t)=\sigma_1(t)f_1(x)=C\sigma_2(t)f_1(x)=\sigma_2(t)f_2(x)=F_2(x,t),\  (x,t)\in D \times (0,T)
$,
which completes the proof.

\bibliographystyle{abbrv}
\bibliography{ref.bib}

\end{document}